\theoremstyle{definition}
\newtheorem{definition}{Definition}[section]
\newtheorem{algorithm}[definition]{Algorithm}
\newtheorem{example}[definition]{Example}
\newtheorem{remark}[definition]{Remark}
\theoremstyle{plain}
\newtheorem{lemma}[definition]{Lemma}
\newtheorem{theorem}[definition]{Theorem}
\newcommand{\free}{\mathbf{Free}} 
\newcommand{\dias}{\mathbf{Dias}} 
\begin{document}

\title{SYMMETRIZATION OF JORDAN DIALGEBRAS}

\author{Murray R. Bremner}

\address{Department of Mathematics and Statistics, University of Saskatchewan, Canada}

\email{bremner@math.usask.ca}

\subjclass[2010]{Primary 17A30. Secondary 16W10, 17A50, 17C05, 17C50, 18D50, 68W30.}



\keywords{Special Jordan dialgebras, diassociative algebras, symmetrization, polynomial identities, 
computational linear algebra, representation theory of the symmetric group.}

\thanks{The author's research was supported by the Discovery Grant \emph{Algebraic Operads} from NSERC, 
the Natural Sciences and Engineering Research Council of Canada.
He thanks Vladimir Dotsenko for introducing him to the problem of polynomial identities satisfied by 
bilinear operations in diassociative algebras.
This work was completed in November 2017 during the author's visit to Suleyman Demirel University 
in Kaskelen, Kazakhstan.
He thanks Askar Dzhumadildaev and Nurlan Ismailov, and their colleagues and students, 
for providing a stimulating research environment.}

\begin{abstract}
A basic problem for any class of nonassociative algebras is to determine the polynomial identities
satisfied by the symmetrization and the skew-symmetrization of the original product.
We consider the symmetrization of the product in the class of special Jordan dialgebras.
We use computational linear algebra to show 
that every polynomial identity of degree $n \le 5$ satisfied by the symmetrized Jordan diproduct in 
every diassociative algebra is a consequence of commutativity.
We determine a complete set of generators for the polynomial identities in degree 6 which are not
consequences of commutativity.
We use a constructive version of the representation theory of the symmetric group to show that there 
exist further new identities in degree 7.
\end{abstract}

\maketitle

\thispagestyle{empty}


\section{Introduction}

Let $\mathcal{V}$ be a class of nonassociative algebras%
\footnote{We interpret ``nonassociative'' loosely to mean ``not necessarily associative''.} 
over a field with a single bilinear operation denoted $a \cdot b$ which is neither commutative
nor anticommutative.
For background on the theory of varieties of nonassociative algebras defined by 
polyomial identities, we refer to Osborn's long paper \cite{Osborn}, the ``Russian book'' \cite{ZSSS}, 
and the monographs on Jordan algebras by Jacobson \cite{JacobsonJ} and McCrimmon \cite{McCrimmon}.

For an algebra $A \in \mathcal{V}$ we define two other operations on the underlying vector space, 
one commutative and one anticommutative:
the symmetrized product (anticommutator, Jordan product) $\{a,b\} = a \cdot b + b \cdot a$,
and the skew-symmetrized product (commutator, Lie bracket)
$[a,b] = a \cdot b - b \cdot a$.
The resulting algebras are denoted $A^+$ and $A^-$, and called the plus and minus algebras of $A$.
This process is closely related to the polarization of operations studied by Markl \& Remm \cite{MarklRemm}.

For a given class $\mathcal{V}$, a basic problem in nonassociative algebra is to determine 
the polynomial identities satisfied by all algebras of the form $A^+$, respectively $A^-$.
In the most familiar case, the variety of associative algebras, it is known that:
\begin{itemize}[leftmargin=*]
\item[$\bullet$]
Every algebra $A^+$ satisfies the Jordan identity, and every polynomial identity of degree $n \le 7$ 
satisfied by every $A^+$ follows from commutativity and the Jordan identity.
However, there are ``special'' identities of degree $n \ge 8$ which are satisfied by every $A^+$ 
but which are not consequences of commutativity and the Jordan identity.
The simplest of these were discovered by Glennie \cite{Glennie1966}.
\item[$\bullet$]
Every algebra $A^-$ satisfies the Jacobi identity, and the Poincar\'e-Birkhoff-Witt theorem 
implies that every polynomial identity of every degree satisfied by every $A^+$ is a consequence of 
anticommutativity and the Jacobi identity.
\end{itemize}

\begin{definition}
\label{dadef}
Loday \cite{L1995,L2001} introduced the notion of \emph{diassociative algebra} 
(or associative dialgebra), which is a vector space with two bilinear operations $\vdash$ and $\dashv$, 
the \emph{right and left products}, satisfying the following polynomial identities:
\[
\begin{array}{l@{\qquad}l}
( x \vdash y ) \vdash z \equiv x \vdash ( y \vdash z ), &
( x \dashv y ) \dashv z \equiv x \dashv ( y \dashv z ),
\\
( x \vdash y ) \dashv z \equiv x \vdash ( y \dashv z ),
\\
( x \dashv y ) \vdash z \equiv ( x \vdash y ) \vdash z, &
x \dashv ( y \dashv z ) \equiv x \dashv ( y \vdash z ).
\end{array}
\]
These are \emph{right, left}, and \emph{inner associativity}, and the \emph{left} and \emph{right bar identities}.
\end{definition}

In the setting of diassociative algebras, the analogues of the Jordan product and the Lie bracket are 
the Jordan diproduct (antidicommutator) 
$\{a,b\} = \widehat{a} \, b + b \, \widehat{a}$
and the Leibniz bracket (dicommutator)
$[a,b] = \widehat{a} \, b - b \, \widehat{a}$.
It is known that:
\begin{itemize}[leftmargin=*]
\item[$\bullet$]
The Jordan diproduct in every diassociative algebra satisfies polynomial identities in degrees 3 and 4
which define the variety of Jordan dialgebras: 
see Kolesnikov \cite{K}, Vel\'asquez \& Felipe \cite{VF}, Bremner \cite{B}.
Every identity of degree $n \le 7$ follows from these identities.
However, there are ``special'' identities in degree 8 which are not consequences of these identities:
the simplest were discovered by Bremner \& Peresi \cite{BP2011}; see also Voronin \cite{V},
Kolesnikov \& Voronin \cite{KV}.
\item[$\bullet$]
The Leibniz bracket satisfies the derivation identity in degree 3 which defines the variety of Leibniz
algebras; see Loday \cite{L1993}.
A generalization of the Poincar\'e-Birkhoff-Witt theorem implies that every polynomial 
identity of every degree satisfied by the Leibniz bracket in every diassociative algebra is a consequence of 
this identity; see Loday \& Pirashvili \cite{LP}, Aymon \& Grivel \cite{AG}, Bokut et al.~\cite{BCL}.
\end{itemize}
Since the Jordan diproduct and the Leibniz bracket are neither commutative nor anticommutative,
it is a basic problem to determine the polynomial identities satisfied by the algebras $A^+$ and $A^-$ 
where $A$ is a Jordan dialgebra or a Leibniz algebra.
This paper studies the algebras $A^+$ where $A$ is a Jordan dialgebra.

\begin{definition}
\label{jddef}
A (left) \emph{Jordan dialgebra} is a vector space with a bilinear operation denoted $(x,y) \mapsto xy$ 
satisfying the following polynomial identities: 
\[
x \cdot (y \cdot z) \equiv x \cdot (z \cdot y),
\qquad
(y \cdot x) \cdot x^2 \equiv (y \cdot x^2) \cdot x,
\qquad
(y,x^2,z) \equiv 2(y,x,z) \cdot z,
\]
where $x^2 = x \cdot x$ and $(x,y,z) = (x \cdot y) \cdot z - x \cdot (y \cdot z)$ is the associator.
\end{definition}

We are concerned with the polynomial identities satisfied by the symmetrization of 
the Jordan diproduct in every diassociative algebra:
\[
xy = x \cdot y + y \cdot x = \widehat{x} \, y + \widehat{y} \, x + y \, \widehat{x} + x \, \widehat{y}.
\]
We work over a field of characteristic 0, unless otherwise indicated.
This assumption implies that every polynomial identity is equivalent to a finite set of multilinear identities
\cite[Chapter 1]{ZSSS}, so we may apply the representation theory of the symmetric group \cite{BMP}.
Some large computations require arithmetic modulo a prime $p$ to reduce memory usage.
Since the structure constants for the group algebra $\mathbb{Q} S_n$ have denominators which are divisors 
of $n!$, if we use a prime $p > n$ where $n$ is the degree of the identities, then the group algebra 
$\mathbb{F}_p S_n$ is semisimple and we can apply rational reconstruction to recover the results in 
characteristic 0.
We do not distinguish between the polynomial identity $f \equiv g$ and the polynomial $f - g$.


\section{Algebraic Operads}

The results of this paper may be conveniently formulated in the language of algebraic operads 
\cite{BD,LV,MSS}.
The operads we consider are symmetric operads in the symmetric monoidal category of vector spaces 
over a field of characteristic 0; the product is the tensor product, and the coproduct 
is the direct sum.

\begin{definition}
We write $\free$ for the free (symmetric) operad generated by a commutative (nonassociative) 
binary operation $\omega$.
We write $\textbf{Dias}$ for the (symmetrization of the nonsymmetric) diassociative operad
generated by the right and left operations $\rho$ and $\lambda$.
A morphism $X\colon \free \longrightarrow \dias$ is uniquely determined by its action on $\omega$; we therefore define the \textit{expansion map} by
\[
X( \omega ) = \lambda + \lambda^{(12)} + \rho + \rho^{(12)}.
\]
The right side of this equation is the operadic form of the symmetrized Jordan diproduct
where the superscript permutations act on the arguments.
\end{definition}

\begin{lemma}
The following dimension formulas are well-known:
\[
\dim \free(n) = (2n{-}3)!! \;\; \text{where} \;\;
n!! = \prod_{i=0}^{\lfloor n/2 \rfloor} (n{-}2i),
\qquad
\dim \dias(n) = n(n!).
\]
\end{lemma}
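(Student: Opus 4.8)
The plan is to treat the two formulas separately, in each case via the standard combinatorial model of the relevant free operad; both statements are classical, so I would cite Loday \cite{L1995,L2001} for $\dias$ and the elementary theory of binary trees for $\free$, but here is how the arguments go.

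For $\dim\free(n)$: a composite of copies of the binary generator $\omega$ is a planar rooted binary tree with $n$ leaves labelled bijectively by $\{1,\dots,n\}$, and the sole relation, symmetry of $\omega$, identifies two such tree monomials precisely when they differ by transposing the two subtrees at some internal vertex. Hence, using freeness of $\free$ to rule out any further relations, a basis of $\free(n)$ is the set of complete rooted binary trees with unordered children and $n$ distinctly labelled leaves, while $\free(1)=\mathbb{K}$ corresponds to the single-leaf tree, consistent with $(2\cdot1-3)!!=(-1)!!=1$. Writing $a_n=\dim\free(n)$, I would establish the recurrence $a_{n+1}=(2n-1)a_n$ by a leaf-insertion bijection: every tree on $n{+}1$ leaves is obtained from a unique tree on $n$ leaves by attaching the leaf $n{+}1$ in one of $2n-1$ ways — either subdividing one of the $2n-2$ edges and hanging $x_{n+1}$ from the new vertex, or introducing a new root whose children are the old root and $x_{n+1}$ — the inverse being to delete $x_{n+1}$ together with its parent and splice. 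With $a_1=1$ this gives $a_n=\prod_{k=1}^{n-1}(2k-1)=(2n-3)!!$, matching the displayed product. Equivalently, since the outermost $\omega$ partitions the leaf set into an unordered pair of nonempty blocks, the exponential generating function $A(x)=\sum_{n\ge1}a_nx^n/n!$ satisfies $A=x+\tfrac12A^2$, whence $A=1-\sqrt{1-2x}$ and the coefficients are again $(2n-3)!!/n!$.

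For $\dim\dias(n)$: I would use Loday's description of the free diassociative algebra. As a vector space the free dialgebra on $V$ is $\bigoplus_{n\ge1}\mathbb{K}^n\otimes V^{\otimes n}$, with degree-$n$ basis elements written as words with one marked letter, $v_1\cdots\widehat{v_i}\cdots v_n$, and with products given by concatenation of words, the mark being retained from the left factor under $\dashv$ and from the right factor under $\vdash$; one checks by inspection that these operations satisfy the five identities of Definition~\ref{dadef} and that $v\mapsto\widehat v$ is the universal arrow $V\to\mathbb{K}^1\otimes V$. Equivalently, one orients the five identities into a term-rewriting system, verifies that it terminates and is confluent, and concludes that every bracketed monomial in $\vdash,\dashv$ has a unique normal form: a word in the variables together with the distinguished ``central'' letter obtained by descending into the right argument at each $\vdash$ and the left argument at each $\dashv$. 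Restricting to the multilinear part in degree $n$, the normal forms are indexed by a permutation (the word order) together with a position in $\{1,\dots,n\}$ (the central letter), and they are linearly independent because they are distinct basis elements of the free dialgebra just exhibited; hence $\dias(n)\cong\mathbb{K}^n\otimes\mathbb{K}S_n$ has dimension $n\cdot n!$.

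The only substantive point is the $\dias$ half, namely that the normal form is well defined — equivalently, confluence of the rewriting system, or equivalently that Loday's explicit formulas really do obey all five axioms together with the universal property. The $\free$ half is a routine enumeration once the tree model is fixed, and I anticipate no difficulty there beyond bookkeeping with the edge case $n=1$.
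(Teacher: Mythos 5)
Your argument is correct, and it essentially reconstructs what the paper takes for granted: the lemma is stated without proof as ``well-known'', and the only justification the paper supplies is the Loday normal form for diassociative monomials in the Algorithm immediately following, which is exactly the word-with-one-marked-letter basis you use to get $n\cdot n!$ (your $(2n-1)$-fold leaf-insertion recurrence for the $(2n-3)!!$ count of unordered labelled binary trees is likewise the standard argument, with the edge and base cases handled correctly). No gaps.
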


\begin{remark}
The number of inequivalent association types (placements of parentheses) for a 
commutative nonassociative operation is given by the sequence of Wedderburn-Etherington numbers \cite{BF},
which begins 1, 1, 1, 2, 3, 6, 11, \dots.
\end{remark}

\begin{algorithm}
Loday \cite{L1995,L2001} proved that the diassociative identities imply 
a simple normal form for diassociative monomials $m = \overline{x_1 \cdots x_n}$ 
where the overline indicates an arbitrary placement of parentheses and an arbitrary assignment 
of right and left operation symbols.
We express $m$ as a plane rooted complete binary tree $t$ with $n$ leaves labelled $x_1, \dots, x_n$ from 
left to right and $n{-}1$ internal nodes (including the root) labelled by operation symbols.
Starting at the root, we follow the path determined by the operations:
$\vdash$ or $\dashv$ indicate respectively that we choose the right or left subtree.
This path terminates at a unique leaf $x_i$, called the center (or middle) of the diassociative monomial $m$.
It follows that
\[
m = x_1 \vdash \cdots \vdash x_{i-1} \vdash x_i \dashv x_{i+1} \dashv \cdots \dashv x_n,
\]
and that this expression is independent of the placement of parentheses.
This allows us to omit the operation symbols and denote the center by a hat:
\[
m = x_1 \cdots x_{i-1} \, \widehat{x}_i \, x_{i+1} \cdots x_n.
\]
Multiplication of monomials then takes the following simple form where
the direction of the operation symbol determines the center of the product:
\[
\begin{array}{l}
x_1 \cdots \widehat{x}_i \cdots x_p
\,\vdash\,
y_1 \cdots \widehat{y}_j \cdots y_q
=
x_1 \cdots x_i \cdots x_p y_1 \cdots \widehat{y}_j \cdots y_q,
\\
x_1 \cdots \widehat{x}_i \cdots x_p
\,\dashv\,
y_1 \cdots \widehat{y}_j \cdots y_q
=
x_1 \cdots \widehat{x}_i \cdots x_p y_1 \cdots y_j \cdots y_q;
\end{array}
\]
\end{algorithm}

\begin{example}
\label{ex3}
In degree 3, we have the following ordered monomial bases:
\[
\free(3)\colon \; (ab)c, \; (ac)b, \; (bc)a;
\qquad
\dias(3)\colon \;
\widehat{a^\sigma} b^\sigma c^\sigma, \; 
a^\sigma \widehat{b^\sigma} c^\sigma, \;
a^\sigma b^\sigma \widehat{c^\sigma} \; ( \sigma \in S_3 ).
\]
(The elements of $S_3$ are in lex order.)
The following formula for $X( (ab)c )$ is easily verified, 
and permutation of the arguments gives $X( (ac)b )$ and $X( (bc)a )$:
\[
    \widehat{a} b c 
+   \widehat{b} a c 
+ 2 \widehat{c} a b 
+ 2 \widehat{c} b a 
+   a \widehat{b} c 
+   b \widehat{a} c 
+   c \widehat{a} b 
+   c \widehat{b} a 
+ 2 a b \widehat{c} 
+ 2 b a \widehat{c}
+   c a \widehat{b} 
+   c b \widehat{a}. 
\]
We obtain (the transpose of) the matrix representing $X$ in degree 3 (dot for zero):
\[
\left[
\begin{array}{cccccccccccccccccc}
1 & . & 1 & . & 2 & 2 & 1 & . & 1 & . & 1 & 1 & 2 & . & 2 & . & 1 & 1 \\[-2pt]
. & 1 & 2 & 2 & 1 & . & . & 1 & 1 & 1 & 1 & . & . & 2 & 1 & 1 & 2 & . \\[-2pt]
2 & 2 & . & 1 & . & 1 & 1 & 1 & . & 1 & . & 1 & 1 & 1 & . & 2 & . & 2
\end{array}
\right]
\]
This matrix has rank 3, so every identity in degree 3 follows from commutativity.
\end{example}


\section{Degrees 4 and 5}

\begin{lemma}
\label{lemma4}
Every multilinear polynomial identity of degree $n \le 4$ satisfied by the symmetrization of 
the Jordan diproduct is a consequence of commutativity.
\end{lemma}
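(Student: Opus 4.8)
The plan is to imitate the degree-$3$ computation of Example~\ref{ex3} one degree higher. For $n \le 2$ the claim is immediate: $\free(1)$ and $\free(2)$ are one-dimensional, and in degree $2$ the element $X(\omega) = \lambda + \lambda^{(12)} + \rho + \rho^{(12)}$ is a nonzero element of the $4$-dimensional space $\dias(2)$, so $X$ is injective there; degree $3$ is Example~\ref{ex3}, where the matrix of $X$ has rank $3 = \dim\free(3)$. It therefore remains to treat $n = 4$, where $\dim\free(4) = 5!! = 15$ and $\dim\dias(4) = 4\cdot 4! = 96$. A multilinear identity of degree $4$ satisfied by the symmetrized Jordan diproduct is precisely an element of $\ker X(4)$, and ``consequence of commutativity'' means that this element already vanishes in $\free(4)$, where commutativity is built into the operad. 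Hence the lemma is equivalent to the assertion that $X(4)\colon \free(4) \to \dias(4)$ is injective, i.e.\ that the $15\times 96$ matrix representing it has rank $15$.

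To assemble that matrix I would fix ordered monomial bases exactly as in Example~\ref{ex3}: for $\free(4)$ the $15$ multilinear commutative monomials, twelve of association type $((ab)c)d$ and three of type $(ab)(cd)$; for $\dias(4)$ the $96$ hatted words $x_1^\sigma x_2^\sigma x_3^\sigma x_4^\sigma$ with $\sigma \in S_4$ and exactly one of the four positions carrying the hat. Since $X$ is a morphism of symmetric operads it is $S_4$-equivariant, so it suffices to expand one representative of each association type, namely $X\big(((ab)c)d\big)$ and $X\big((ab)(cd)\big)$, using $X(\omega) = \lambda + \lambda^{(12)} + \rho + \rho^{(12)}$ together with operad composition and the diassociative multiplication rule recalled in the Algorithm; each such expansion is a sum of $4^3 = 64$ hatted words before collecting like terms, and permuting $a,b,c,d$ produces the remaining thirteen rows.

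The proof is completed by a direct rank computation: row-reduce the resulting $15\times 96$ matrix over $\mathbb{Q}$ --- or, as explained in the introduction, modulo a prime $p > 4$ and then reconstruct --- and verify that its rank is $15$. It follows that $X(4)$ is injective, no new identity can appear in degree $4$, and every multilinear identity of degree $n \le 4$ follows from commutativity. The only real obstacle is bookkeeping: carrying out the two degree-$4$ expansions correctly (recalling that the center of a diassociative product is inherited from the hatted argument of the outer operation) and arranging the $96$ columns consistently so that the linear algebra is trustworthy; conceptually there is nothing beyond the pattern already established in degree $3$. One could instead decompose $\free(4)$ and $\dias(4)$ into irreducible $S_4$-modules and check injectivity on each isotypic component, but at this degree the brute-force rank computation is entirely adequate.
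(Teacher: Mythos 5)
Your proposal is correct and follows essentially the same route as the paper: expand the two association types $((ab)c)d$ and $(ab)(cd)$ under $X$, build the $15\times 96$ (or transposed $96\times 15$) coefficient matrix, and verify it has rank $15$ so that $\ker X(4)=0$. The only cosmetic difference is that the paper certifies the rank by exhibiting an explicit $15\times 15$ full-rank submatrix (rows $1$--$14$ and $25$) rather than row-reducing the whole matrix.
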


\begin{figure}[ht]
$\left[
\begin{array}{ccccccccccccccc}
1 & \cdot & \cdot & \cdot & \cdot & \cdot & 2 & 4 & \cdot & \cdot & \cdot & 4 & 2 & \cdot & \cdot \\[-2pt]
\cdot & 1 & \cdot & \cdot & \cdot & \cdot & \cdot & \cdot & 2 & 4 & \cdot & 4 & 2 & \cdot & \cdot \\[-2pt]
\cdot & \cdot & 1 & \cdot & \cdot & \cdot & 2 & 4 & \cdot & 4 & \cdot & \cdot & \cdot & 2 & \cdot \\[-2pt]
\cdot & \cdot & \cdot & 1 & \cdot & \cdot & \cdot & \cdot & \cdot & 4 & 2 & 4 & \cdot & 2 & \cdot \\[-2pt]
\cdot & \cdot & \cdot & \cdot & 1 & \cdot & \cdot & 4 & 2 & 4 & \cdot & \cdot & \cdot & \cdot & 2 \\[-2pt]
\cdot & \cdot & \cdot & \cdot & \cdot & 1 & \cdot & 4 & \cdot & \cdot & 2 & 4 & \cdot & \cdot & 2 \\[-2pt]
1 & \cdot & 2 & 4 & \cdot & \cdot & \cdot & \cdot & \cdot & \cdot & 4 & \cdot & 2 & \cdot & \cdot \\[-2pt]
\cdot & 1 & \cdot & \cdot & 2 & 4 & \cdot & \cdot & \cdot & \cdot & 4 & \cdot & 2 & \cdot & \cdot \\[-2pt]
\cdot & \cdot & 2 & 4 & \cdot & 4 & 1 & \cdot & \cdot & \cdot & \cdot & \cdot & \cdot & \cdot & 2 \\[-2pt]
\cdot & \cdot & \cdot & \cdot & \cdot & 4 & \cdot & 1 & \cdot & \cdot & 4 & 2 & \cdot & \cdot & 2 \\[-2pt]
\cdot & \cdot & \cdot & 4 & 2 & 4 & \cdot & \cdot & 1 & \cdot & \cdot & \cdot & \cdot & 2 & \cdot \\[-2pt]
\cdot & \cdot & \cdot & 4 & \cdot & \cdot & \cdot & \cdot & \cdot & 1 & 4 & 2 & \cdot & 2 & \cdot \\[-2pt]
2 & 4 & 1 & \cdot & \cdot & \cdot & \cdot & \cdot & 4 & \cdot & \cdot & \cdot & \cdot & 2 & \cdot \\[-2pt]
\cdot & \cdot & \cdot & 1 & 4 & 2 & \cdot & \cdot & 4 & \cdot & \cdot & \cdot & \cdot & 2 & \cdot \\[-2pt]
1 & \cdot & \cdot & \cdot & \cdot & \cdot & 1 & 1 & \cdot & \cdot & \cdot & 2 & 2 & \cdot & \cdot
\end{array}
\right]$
\caption{Full rank submatrix for proof of Lemma \ref{lemma4}}
\label{submatrix4}
\end{figure}

\begin{proof}
Similar to Example \ref{ex3}, but the matrix is larger.
In degree 4, the two association types $((--)-)-$ and $(--)(--)$ for a commutative nonassociative 
operation have respectively 12 and 3 multilinear monomials, ordered by type and then by permutation 
of the arguments.
We expand the monomials with the identity permutation into the diassociative operad 
using the symmetrized Jordan diproduct:
\[
\begin{array}{r@{\;}l}
X(((ab)c)d)
&=
    \widehat{a} b c d   
+   \widehat{b} a c d   
+ 2 \widehat{c} a b d   
+ 2 \widehat{c} b a d   
+ 4 \widehat{d} a b c   
+ 4 \widehat{d} b a c   
+ 4 \widehat{d} c a b   
+ 4 \widehat{d} c b a   
\\
&{}
+   a \widehat{b} c d   
+   b \widehat{a} c d   
+   c \widehat{a} b d   
+   c \widehat{b} a d   
+   d \widehat{a} b c   
+   d \widehat{b} a c   
+ 2 d \widehat{c} a b   
+ 2 d \widehat{c} b a   
\\
&{}
+ 2 a b \widehat{c} d   
+ 2 b a \widehat{c} d   
+   c a \widehat{b} d   
+   c b \widehat{a} d   
+   d a \widehat{b} c   
+   d b \widehat{a} c   
+   d c \widehat{a} b   
+   d c \widehat{b} a   
\\
&{}
+ 4 a b c \widehat{d}   
+ 4 b a c \widehat{d}   
+ 4 c a b \widehat{d}   
+ 4 c b a \widehat{d}   
+ 2 d a b \widehat{c}   
+ 2 d b a \widehat{c}   
+   d c a \widehat{b}   
+   d c b \widehat{a},  
\\[3pt]
X((ab)(cd))
&=
  2 \widehat{a} b c d   
+ 2 \widehat{a} b d c   
+ 2 \widehat{b} a c d   
+ 2 \widehat{b} a d c   
+ 2 \widehat{c} d a b   
+ 2 \widehat{c} d b a   
+ 2 \widehat{d} c a b   
+ 2 \widehat{d} c b a   
\\
&{}
+ 2 a \widehat{b} c d   
+ 2 a \widehat{b} d c   
+ 2 b \widehat{a} c d   
+ 2 b \widehat{a} d c   
+ 2 c \widehat{d} a b   
+ 2 c \widehat{d} b a   
+ 2 d \widehat{c} a b   
+ 2 d \widehat{c} b a   
\\
&{}
+ 2 a b \widehat{c} d   
+ 2 a b \widehat{d} c   
+ 2 b a \widehat{c} d   
+ 2 b a \widehat{d} c   
+ 2 c d \widehat{a} b   
+ 2 c d \widehat{b} a   
+ 2 d c \widehat{a} b   
+ 2 d c \widehat{b} a   
\\
&{}
+ 2 a b c \widehat{d}   
+ 2 a b d \widehat{c}   
+ 2 b a c \widehat{d}   
+ 2 b a d \widehat{c}   
+ 2 c d a \widehat{b}   
+ 2 c d b \widehat{a}   
+ 2 d c a \widehat{b}   
+ 2 d c b \widehat{a}.  
\end{array}
\]
Permutation of the arguments gives the expansions of the other monomials.
In degree 4, we order the $4 \cdot 4! = 96$ multilinear diassociative monomials by position of the center 
and then by permutation of the arguments.
We construct the $96 \times 15$ matrix $E$ whose $(i,j)$ entry is the coefficient of 
the $i$-th diassociative monomial in the expansion of the $j$-th commutative nonassociative monomial.
The submatrix consisting of rows 1--14 and 25 corresponds to the diassociative monomials
$\widehat{a}bcd$, \dots, $\widehat{c}adb, a\widehat{b}cd$ (Figure \ref{submatrix4}).
This submatrix has full rank which completes the proof.
\end{proof}

\begin{figure}[ht]
$\begin{array}{l}
X( (((ab)c)d)e ) =
\\
\;\;\;\;\,
    \widehat{a} b c d e   
+   \widehat{b} a c d e   
+ 2 \widehat{c} a b d e   
+ 2 \widehat{c} b a d e   
+ 4 \widehat{d} a b c e   
+ 4 \widehat{d} b a c e   
+ 4 \widehat{d} c a b e   
+ 4 \widehat{d} c b a e   
\\
{} 
+ 8 \widehat{e} a b c d   
+ 8 \widehat{e} b a c d   
+ 8 \widehat{e} c a b d   
+ 8 \widehat{e} c b a d   
+ 8 \widehat{e} d a b c   
+ 8 \widehat{e} d b a c   
+ 8 \widehat{e} d c a b   
+ 8 \widehat{e} d c b a   
\\
{} 
+   a \widehat{b} c d e   
+   b \widehat{a} c d e   
+   c \widehat{a} b d e   
+   c \widehat{b} a d e   
+   d \widehat{a} b c e   
+   d \widehat{b} a c e   
+ 2 d \widehat{c} a b e   
+ 2 d \widehat{c} b a e   
\\
{} 
+   e \widehat{a} b c d   
+   e \widehat{b} a c d   
+ 2 e \widehat{c} a b d   
+ 2 e \widehat{c} b a d   
+ 4 e \widehat{d} a b c   
+ 4 e \widehat{d} b a c   
+ 4 e \widehat{d} c a b   
+ 4 e \widehat{d} c b a   
\\
{} 
+ 2 a b \widehat{c} d e   
+ 2 b a \widehat{c} d e   
+   c a \widehat{b} d e   
+   c b \widehat{a} d e   
+   d a \widehat{b} c e   
+   d b \widehat{a} c e   
+   d c \widehat{a} b e   
+   d c \widehat{b} a e   
\\
{} 
+   e a \widehat{b} c d   
+   e b \widehat{a} c d   
+   e c \widehat{a} b d   
+   e c \widehat{b} a d   
+   e d \widehat{a} b c   
+   e d \widehat{b} a c   
+ 2 e d \widehat{c} a b   
+ 2 e d \widehat{c} b a   
\\
{} 
+ 4 a b c \widehat{d} e   
+ 4 b a c \widehat{d} e   
+ 4 c a b \widehat{d} e   
+ 4 c b a \widehat{d} e   
+ 2 d a b \widehat{c} e   
+ 2 d b a \widehat{c} e   
+   d c a \widehat{b} e   
+   d c b \widehat{a} e   
\\
{} 
+ 2 e a b \widehat{c} d   
+ 2 e b a \widehat{c} d   
+   e c a \widehat{b} d   
+   e c b \widehat{a} d   
+   e d a \widehat{b} c   
+   e d b \widehat{a} c   
+   e d c \widehat{a} b   
+   e d c \widehat{b} a   
\\
{} 
+ 8 a b c d \widehat{e}   
+ 8 b a c d \widehat{e}   
+ 8 c a b d \widehat{e}   
+ 8 c b a d \widehat{e}   
+ 8 d a b c \widehat{e}   
+ 8 d b a c \widehat{e}   
+ 8 d c a b \widehat{e}   
+ 8 d c b a \widehat{e}   
\\
{} 
+ 4 e a b c \widehat{d}   
+ 4 e b a c \widehat{d}   
+ 4 e c a b \widehat{d}   
+ 4 e c b a \widehat{d}   
+ 2 e d a b \widehat{c}   
+ 2 e d b a \widehat{c}   
+   e d c a \widehat{b}   
+   e d c b \widehat{a},  
\\[3pt]
X( ((ab)(cd))e ) =
\\
\;\;\;\;\,
  2 \widehat{a} b c d e   
+ 2 \widehat{a} b d c e   
+ 2 \widehat{b} a c d e   
+ 2 \widehat{b} a d c e   
+ 2 \widehat{c} d a b e   
+ 2 \widehat{c} d b a e   
+ 2 \widehat{d} c a b e   
+ 2 \widehat{d} c b a e   
\\
{} 
+ 8 \widehat{e} a b c d   
+ 8 \widehat{e} a b d c   
+ 8 \widehat{e} b a c d   
+ 8 \widehat{e} b a d c   
+ 8 \widehat{e} c d a b   
+ 8 \widehat{e} c d b a   
+ 8 \widehat{e} d c a b   
+ 8 \widehat{e} d c b a   
\\
{} 
+ 2 a \widehat{b} c d e   
+ 2 a \widehat{b} d c e   
+ 2 b \widehat{a} c d e   
+ 2 b \widehat{a} d c e   
+ 2 c \widehat{d} a b e   
+ 2 c \widehat{d} b a e   
+ 2 d \widehat{c} a b e   
+ 2 d \widehat{c} b a e   
\\
{} 
+ 2 e \widehat{a} b c d   
+ 2 e \widehat{a} b d c   
+ 2 e \widehat{b} a c d   
+ 2 e \widehat{b} a d c   
+ 2 e \widehat{c} d a b   
+ 2 e \widehat{c} d b a   
+ 2 e \widehat{d} c a b   
+ 2 e \widehat{d} c b a   
\\
{} 
+ 2 a b \widehat{c} d e   
+ 2 a b \widehat{d} c e   
+ 2 b a \widehat{c} d e   
+ 2 b a \widehat{d} c e   
+ 2 c d \widehat{a} b e   
+ 2 c d \widehat{b} a e   
+ 2 d c \widehat{a} b e   
+ 2 d c \widehat{b} a e   
\\
{} 
+ 2 e a \widehat{b} c d   
+ 2 e a \widehat{b} d c   
+ 2 e b \widehat{a} c d   
+ 2 e b \widehat{a} d c   
+ 2 e c \widehat{d} a b   
+ 2 e c \widehat{d} b a   
+ 2 e d \widehat{c} a b   
+ 2 e d \widehat{c} b a   
\\
{} 
+ 2 a b c \widehat{d} e   
+ 2 a b d \widehat{c} e   
+ 2 b a c \widehat{d} e   
+ 2 b a d \widehat{c} e   
+ 2 c d a \widehat{b} e   
+ 2 c d b \widehat{a} e   
+ 2 d c a \widehat{b} e   
+ 2 d c b \widehat{a} e   
\\
{} 
+ 2 e a b \widehat{c} d   
+ 2 e a b \widehat{d} c   
+ 2 e b a \widehat{c} d   
+ 2 e b a \widehat{d} c   
+ 2 e c d \widehat{a} b   
+ 2 e c d \widehat{b} a   
+ 2 e d c \widehat{a} b   
+ 2 e d c \widehat{b} a   
\\
{} 
+ 8 a b c d \widehat{e}   
+ 8 a b d c \widehat{e}   
+ 8 b a c d \widehat{e}   
+ 8 b a d c \widehat{e}   
+ 8 c d a b \widehat{e}   
+ 8 c d b a \widehat{e}   
+ 8 d c a b \widehat{e}   
+ 8 d c b a \widehat{e}   
\\
{} 
+ 2 e a b c \widehat{d}   
+ 2 e a b d \widehat{c}   
+ 2 e b a c \widehat{d}   
+ 2 e b a d \widehat{c}   
+ 2 e c d a \widehat{b}   
+ 2 e c d b \widehat{a}   
+ 2 e d c a \widehat{b}   
+ 2 e d c b \widehat{a},  
\\[3pt]
X( ((ab)c)(de) ) =
\\
\;\;\;\;\,
  2 \widehat{a} b c d e   
+ 2 \widehat{a} b c e d   
+ 2 \widehat{b} a c d e   
+ 2 \widehat{b} a c e d   
+ 4 \widehat{c} a b d e   
+ 4 \widehat{c} a b e d   
+ 4 \widehat{c} b a d e   
+ 4 \widehat{c} b a e d   
\\
{} 
+ 4 \widehat{d} e a b c   
+ 4 \widehat{d} e b a c   
+ 4 \widehat{d} e c a b   
+ 4 \widehat{d} e c b a   
+ 4 \widehat{e} d a b c   
+ 4 \widehat{e} d b a c   
+ 4 \widehat{e} d c a b   
+ 4 \widehat{e} d c b a   
\\
{} 
+ 2 a \widehat{b} c d e   
+ 2 a \widehat{b} c e d   
+ 2 b \widehat{a} c d e   
+ 2 b \widehat{a} c e d   
+ 2 c \widehat{a} b d e   
+ 2 c \widehat{a} b e d   
+ 2 c \widehat{b} a d e   
+ 2 c \widehat{b} a e d   
\\
{} 
+ 4 d \widehat{e} a b c   
+ 4 d \widehat{e} b a c   
+ 4 d \widehat{e} c a b   
+ 4 d \widehat{e} c b a   
+ 4 e \widehat{d} a b c   
+ 4 e \widehat{d} b a c   
+ 4 e \widehat{d} c a b   
+ 4 e \widehat{d} c b a   
\\
{} 
+ 4 a b \widehat{c} d e   
+ 4 a b \widehat{c} e d   
+ 4 b a \widehat{c} d e   
+ 4 b a \widehat{c} e d   
+ 2 c a \widehat{b} d e   
+ 2 c a \widehat{b} e d   
+ 2 c b \widehat{a} d e   
+ 2 c b \widehat{a} e d   
\\
{} 
+ 2 d e \widehat{a} b c   
+ 2 d e \widehat{b} a c   
+ 4 d e \widehat{c} a b   
+ 4 d e \widehat{c} b a   
+ 2 e d \widehat{a} b c   
+ 2 e d \widehat{b} a c   
+ 4 e d \widehat{c} a b   
+ 4 e d \widehat{c} b a   
\\
{} 
+ 4 a b c \widehat{d} e   
+ 4 a b c \widehat{e} d   
+ 4 b a c \widehat{d} e   
+ 4 b a c \widehat{e} d   
+ 4 c a b \widehat{d} e   
+ 4 c a b \widehat{e} d   
+ 4 c b a \widehat{d} e   
+ 4 c b a \widehat{e} d   
\\
{} 
+ 2 d e a \widehat{b} c   
+ 2 d e b \widehat{a} c   
+ 2 d e c \widehat{a} b   
+ 2 d e c \widehat{b} a   
+ 2 e d a \widehat{b} c   
+ 2 e d b \widehat{a} c   
+ 2 e d c \widehat{a} b   
+ 2 e d c \widehat{b} a   
\\
{} 
+ 4 a b c d \widehat{e}   
+ 4 a b c e \widehat{d}   
+ 4 b a c d \widehat{e}   
+ 4 b a c e \widehat{d}   
+ 4 c a b d \widehat{e}   
+ 4 c a b e \widehat{d}   
+ 4 c b a d \widehat{e}   
+ 4 c b a e \widehat{d}   
\\
{} 
+ 4 d e a b \widehat{c}   
+ 4 d e b a \widehat{c}   
+ 2 d e c a \widehat{b}   
+ 2 d e c b \widehat{a}   
+ 4 e d a b \widehat{c}   
+ 4 e d b a \widehat{c}   
+ 2 e d c a \widehat{b}   
+ 2 e d c b \widehat{a}.  
\end{array}$
\caption{Expansions with identity permutation in degree 5}
\label{expansions5}
\end{figure}

\begin{lemma}
Every multilinear polynomial identity of degree $n \le 5$ satisfied by the symmetrization of 
the Jordan diproduct is a consequence of commutativity.
\end{lemma}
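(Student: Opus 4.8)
The plan is to follow the same strategy as Lemma \ref{lemma4}, the only difference being that in degree 5 the matrices are substantially larger. Concretely, I would show that the expansion map $X$ is injective in arity $5$; since its source is the free commutative operad $\free(5)$, injectivity is precisely the assertion that every multilinear degree-$5$ identity for the symmetrized Jordan diproduct is a consequence of commutativity. Combined with Lemma \ref{lemma4}, which already handles $n \le 4$, this establishes the statement for all $n \le 5$.

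First I would record the combinatorial data. For a commutative nonassociative operation there are three association types in degree $5$, corresponding to the Wedderburn-Etherington number $3$: the left comb $(((ab)c)d)e$, the type $((ab)(cd))e$, and the type $((ab)c)(de)$, which carry respectively $\binom{5}{2}\cdot 3! = 60$, $5\cdot 3 = 15$, and $\binom{5}{2}\cdot 3 = 30$ multilinear monomials, for a total of $60 + 15 + 30 = 105 = \dim\free(5)$. On the diassociative side there are $5\cdot 5! = 600 = \dim\dias(5)$ multilinear monomials, which I would order by position of the center and then by permutation of the arguments. Next I would expand the three monomials obtained by applying the identity permutation to each association type; these expansions are displayed in Figure \ref{expansions5}, and each is verified directly from the multiplication rule for diassociative monomials together with the formula $X(\omega) = \lambda + \lambda^{(12)} + \rho + \rho^{(12)}$. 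Permuting the arguments yields the expansions of all $105$ multilinear commutative monomials, and assembling the coefficients gives the $600\times 105$ matrix $E$ whose $(i,j)$ entry is the coefficient of the $i$-th diassociative monomial in the expansion of the $j$-th commutative monomial.

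It then remains to check that $E$ has rank $105$, i.e.\ full column rank; equivalently, one exhibits a $105\times 105$ submatrix of $E$ with nonzero determinant, by selecting $105$ of the $600$ diassociative monomials whose corresponding rows are linearly independent (as in Figure \ref{submatrix4}, the leading structure of the expansions makes such a choice transparent). The only genuine obstacle is the size of this rank computation rather than any conceptual difficulty. Since $\dim\free(5)$ and $\dim\dias(5)$ are still moderate, the computation is feasible directly over $\mathbb{Q}$; to reduce memory one may instead compute $\operatorname{rank} E$ modulo a prime $p > 5$, so that $\mathbb{F}_p S_5$ is semisimple and rational reconstruction recovers the characteristic-$0$ conclusion, or one may block-diagonalize $E$ using the isotypic decompositions of the $S_5$-modules $\free(5)$ and $\dias(5)$ and verify full column rank block by block. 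Any of these confirms $\operatorname{rank} E = 105$, hence the injectivity of $X$ in arity $5$, which completes the proof.
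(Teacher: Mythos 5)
Your proposal is correct and follows essentially the same route as the paper: build the $600\times 105$ expansion matrix $E$ from the three association types (with $60$, $15$, $30$ multilinear monomials), using the expansions of the identity-permutation monomials in Figure \ref{expansions5} and permuting arguments, then verify full column rank $105$ by extracting a nonsingular $105\times 105$ submatrix of rows. The paper simply lists the lexicographically first $105$ rows forming a basis of the row space and does the rank computation directly (modular arithmetic is reserved for degree $6$), but this is only a difference in computational bookkeeping, not in the argument.
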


\begin{proof}
Similar to Lemma \ref{lemma4}, but the matrix is much larger.
In degree 5, the three association types 
$(((--)-)-)-$, $((--)(--))-$, $((--)-)(--)$
for a commutative nonassociative operation
have respectively 60, 15, 30 multilinear monomials.
We expand the monomials with the identity permutation into the diassociative operad
(Figure \ref{expansions5});
there are $5 \cdot 5! = 600$ multilinear diassociative monomials.
We construct the $600 \times 105$ matrix $E$ whose $(i,j)$ entry is the coefficient of 
the $i$-th diassociative monomial in the expansion of the $j$-th commutative nonassociative monomial.
We extract the submatrix consisting of these 105 rows:
1--69, 71, 73--83, 85, 86, 91, 97--101, 103, 121--125, 127--129, 133, 145, 147, 149, 151, 169, 241.
These rows are the lexicographically first subset which forms a basis of the row space.
This submatrix has full rank and hence so does $E$, which completes the proof.
\end{proof}


\section{Degree 6}

\begin{theorem}
\label{theorem6}
In degree 6, there is an 8-dimensional space of multilinear polynomial identities satisfied by 
the symmetrization of the Jordan diproduct which do not follow from commutativity.
This $S_6$-module has the structure $3[6] \oplus [51]$, where $[\lambda]$ denotes the simple
module corresponding to partition $\lambda$, and so we may restrict our attention to
nonlinear identities in the variables $x^6$ and $x^5y$.
Every identity in the variables $x^6$ is a linear combination of these three identities:
\begin{align}
&
\label{deg6.6.1}
    ((x^2x^2)x)x 
-2  ((x^2x)x^2)x 
+4  ((x^2x)x)x^2 
-3  (x^2x^2)x^2 
\equiv 0,
\\[-2pt]
&
\label{deg6.6.2}
 2  ((x^2x^2)x)x 
-2  ((x^2x)x^2)x 
-2  ((x^2x)x)x^2 
+   (x^2x^2)x^2 
+   (x^2x)(x^2x) 
\equiv 0,
\\[-2pt]
&
\label{deg6.6.3}
 8  (((x^2x)x)x)x 
-4  ((x^2x^2)x)x 
-5  ((x^2x)x^2)x 
-   ((x^2x)x)x^2 
-   (x^2x^2)x^2 
\\[-2pt]
\notag
&\quad
+3  (x^2x)(x^2x) 
\equiv 0.
\end{align}
Every identity in $x^5y$ is a linear combination of the four identities
in Figure \ref{fouridentities}.
\end{theorem}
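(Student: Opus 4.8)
The plan is to make the degree-$6$ expansion map $X\colon \free(6) \to \dias(6)$ completely explicit, to compute the dimension and the $S_6$-module structure of its kernel by computational linear algebra together with the representation theory of $S_6$, and then to deduce the statements about one- and two-variable identities from the multiplicities. In degree $6$ there are six association types for a commutative nonassociative operation (the sixth Wedderburn--Etherington number), carrying respectively $360, 90, 180, 180, 45, 90$ multilinear monomials, so $\dim\free(6) = 9!! = 945$, while $\dim\dias(6) = 6\cdot 6! = 4320$. Exactly as in the proofs of the previous lemmas, I would order the monomials by association type and then by permutation, expand the monomial with the identity permutation in each association type using $X(\omega) = \lambda + \lambda^{(12)} + \rho + \rho^{(12)}$ and the diassociative normal form, and fill in the remaining rows by permuting the arguments, obtaining a $4320 \times 945$ integer matrix $E$ whose entries are small powers of $2$. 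Row reducing $E$ --- working modulo a prime $p > 6$ to control the intermediate fractions and then lifting by rational reconstruction --- should give $\operatorname{rank} E = 937$, so that $\ker X$, which (commutativity being built into $\free$) is exactly the space of multilinear identities not following from commutativity, has dimension $945 - 937 = 8$.

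Since $\ker X$ is an $S_6$-submodule of $\free(6)$, it then suffices to compute, for each of the eleven partitions $\lambda \vdash 6$, the multiplicity of the simple module $[\lambda]$ in $\ker X$. By Schur's lemma $X$ respects the isotypic decomposition, and if $X_\lambda$ denotes the induced map between the $\lambda$-isotypic components then the multiplicity of $[\lambda]$ in $\ker X$ equals the multiplicity of $[\lambda]$ in $\free(6)$ minus $(\operatorname{rank} X_\lambda)/\dim[\lambda]$; the matrix of $X_\lambda$ is obtained by representing $\free(6)$ and $\dias(6)$ through the Wedderburn components of $\mathbb{Q} S_6$, equivalently by applying Young symmetrizers. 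Carrying this out should give multiplicity $3$ for $\lambda = [6]$, multiplicity $1$ for $\lambda = [51]$, and $0$ for every other partition, so $\ker X \cong 3[6] \oplus [51]$; as a check, $3\cdot 1 + 1\cdot 5 = 8$.

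To pass from multiplicities to nonlinear identities I would use the standard fact that the number of linearly independent multihomogeneous identities of a given multidegree $\mu$ equals $\sum_\lambda m_\lambda K_{\lambda\mu}$, where the $m_\lambda$ are the multiplicities just found and $K_{\lambda\mu}$ is the Kostka number (the dimension of the space of $S_\mu$-invariants in $[\lambda]$). For $\mu = (6)$ this equals $m_{[6]} = 3$ and for $\mu = (5,1)$ it equals $m_{[6]} + m_{[51]} = 4$; since $[6]$ and $[51]$ are the only irreducibles occurring and both are already detected by these two multidegrees, it is legitimate to restrict attention to identities in $x^6$ and $x^5y$. It then remains only to exhibit explicit spanning sets and to verify them: working in the six-dimensional space of degree-$6$ monomials in one variable (these being linearly independent modulo commutativity) one checks by expanding with $X$ that \eqref{deg6.6.1}, \eqref{deg6.6.2}, \eqref{deg6.6.3} lie in $\ker X$ and are linearly independent, hence span the $3$-dimensional space of $x^6$-identities; likewise the four identities of Figure \ref{fouridentities} are verified to be identities in $x^5y$ and to be linearly independent, hence span the $4$-dimensional space, three of them being partial linearizations of \eqref{deg6.6.1}--\eqref{deg6.6.3} and the fourth generating the copy of $[51]$.

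The main obstacle is purely one of scale: the $4320 \times 945$ matrix is already large, and the block-by-block decomposition over all eleven partitions of $6$ is memory-intensive, which is precisely what forces the modular arithmetic and rational reconstruction mentioned in the introduction. Once the ranks have been computed there is no conceptual difficulty; the passage from multiplicities to the displayed nonlinear identities, and the verification that the listed identities are correct and independent, are routine.
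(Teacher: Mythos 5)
Your proposal is correct and follows the same overall strategy as the paper: build the $4320\times 945$ expansion matrix, find that its rank is $937$ so the kernel is $8$-dimensional, identify the $S_6$-module structure $3[6]\oplus[51]$, and use the multiplicities (via $\dim M^{S_\mu}=\sum_\lambda m_\lambda K_{\lambda\mu}$, giving $3$ for $x^6$ and $4$ for $x^5y$) to justify restricting to those two multidegrees. You differ from the paper in two computational steps. First, for the module structure you propose splitting the expansion map into isotypic components and reading off multiplicities from $\operatorname{rank}(X_\lambda)/\dim[\lambda]$; the paper instead reconstructs an integer basis of the $8$-dimensional nullspace, computes the $8\times 8$ matrices of eleven conjugacy-class representatives acting on that basis, and decomposes the resulting character. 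Your method is exactly the one the paper defers to degree $7$, and it works equally well here; the paper's character computation is cheaper once the small nullspace is in hand. Second, for the explicit identities the paper \emph{derives} \eqref{deg6.6.1}--\eqref{deg6.51.4} by forming the small integer expansion matrices for the $x^6$ monomials ($6\times 6$, rank $3$) and the $x^5y$ monomials ($36\times 20$, nullity $4$) and extracting reduced integer nullspace bases via Hermite normal form and LLL, whereas you \emph{verify} the listed identities and conclude by the dimension count. Your verification route is logically complete, since membership in the kernel plus linear independence plus the known dimensions $3$ and $4$ forces spanning; the paper's route explains where the particular coefficients come from. One small caution: your parenthetical remark that three of the four $x^5y$ identities are partial linearizations of \eqref{deg6.6.1}--\eqref{deg6.6.3} is not claimed by the paper and need not hold for the specific representatives listed (only that the $4$-dimensional span contains the $3$-dimensional space of partial linearizations); since this remark carries no weight in your argument, it does not affect correctness. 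Likewise, for the modular rank computation you should use a large prime (the paper uses $p=1000003$) rather than merely $p>6$, so that the rank does not accidentally drop and rational reconstruction succeeds.
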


\begin{figure}[ht]
\begin{align}
\label{deg6.51.1}
&
     ((x^2x^2)x)y    
 +   ((x^2x^2)y)x    
 +4  ((x^2(xy))x)x   
 -   ((x^2x)x^2)y    
\\
\notag
&\quad
 -2  ((x^2x)(xy))x   
 -   ((x^2y)x^2)x    
 -2  (((xy)x)x^2)x   
 -2  ((x^2x)x)(xy)   
\\
\notag
&\quad
 -   ((x^2x)y)x^2    
 -   ((x^2y)x)x^2    
 -2  (((xy)x)x)x^2   
 +   (x^2x^2)(xy)    
\\
\notag
&\quad
 +2  (x^2(xy))x^2    
 +   (x^2x)(x^2y)    
 +2  (x^2x)((xy)x) \equiv 0,  
\\
\label{deg6.51.2}
&
  8  (((x^2x)x)y)x   
 +8  (((x^2y)x)x)x   
 -5  ((x^2x^2)y)x    
 -4  ((x^2(xy))x)x   
\\
\notag
&\quad
 -   ((x^2x)x^2)y    
 -6  ((x^2x)(xy))x   
 -3  ((x^2y)x^2)x    
 +2  ((x^2x)x)(xy)   
\\
\notag
&\quad
 +   ((x^2x)y)x^2    
 -3  ((x^2y)x)x^2    
 +4  (((xy)x)x)x^2   
 -2  (x^2x^2)(xy)    
\\
\notag
&\quad
 -4  (x^2(xy))x^2    
 +3  (x^2x)(x^2y)    
 +2  (x^2x)((xy)x) \equiv 0,  
\\
\label{deg6.51.3}
&
  2  ((x^2x^2)x)y    
 +2  ((x^2x^2)y)x    
 +8  ((x^2(xy))x)x   
 -3  ((x^2x)x^2)y    
\\
\notag
&\quad
 -6  ((x^2x)(xy))x   
 -3  ((x^2y)x^2)x    
 -6  (((xy)x)x^2)x   
 +6  ((x^2x)x)(xy)   
\\
\notag
&\quad
 +3  ((x^2x)y)x^2    
 +3  ((x^2y)x)x^2    
 +6  (((xy)x)x)x^2   
 -5  (x^2x^2)(xy)    
\\
\notag
&\quad
-10  (x^2(xy))x^2    
 +   (x^2x)(x^2y)    
 +2  (x^2x)((xy)x) \equiv 0,  
\\
\label{deg6.51.4}
&
  8  (((x^2x)x)x)y   
 +8  (((x^2x)y)x)x   
+16  ((((xy)x)x)x)x  
 -5  ((x^2x^2)x)y    
\\
\notag
&\quad
-16  ((x^2(xy))x)x   
 -3  ((x^2x)x^2)y    
 -2  ((x^2x)(xy))x   
 -   ((x^2y)x^2)x    
\\
\notag
&\quad
 -8  (((xy)x)x^2)x   
 -2  ((x^2x)x)(xy)   
 -   ((x^2x)y)x^2    
 +3  ((x^2y)x)x^2    
\\
\notag
&\quad
 -4  (((xy)x)x)x^2   
 -   (x^2x^2)(xy)    
 -2  (x^2(xy))x^2    
 +2  (x^2x)(x^2y)    
\\
\notag
&\quad
 +8  (x^2x)((xy)x) \equiv 0.  
\end{align}
\caption{Four identities in the variables $x^5y$}
\label{fouridentities}
\end{figure}

\begin{proof}
At this point, the matrices become so large that efficient computation requires modular arithmetic,
and we use the large prime $p = 1000003$.

There are six association types for a commutative nonassociative operation, with respectively 
360, 90, 180, 180, 45, 90 multilinear monomials, for a total of 945:
\[
\begin{array}{ccc}
((((--)-)-)-)-, &\qquad
(((--)(--))-)-, &\qquad
(((--)-)(--))-,
\\
(((--)-)-)(--), &\qquad
((--)(--))(--), &\qquad
((--)-)((--)-).
\end{array}
\]
With arguments $abcdef$, the expansions of these association types into the diassociative operad 
(using the symmetrized Jordan diproduct) each have 192 terms (and are therefore omitted); 
the coefficients of the expansions belong respectively to the sets
$\{1,2,4,8,16\}$, 
$\{2,8,16\}$, 
$\{2,4,16\}$, 
$\{2,4,8\}$, 
$\{4,8\}$, 
$\{4,8\}$.
There are $6 \cdot 6! = 4320$ multilinear diassociative monomials.
We construct the $4320 \times 945$ matrix $E$ whose $(i,j)$ entry is the coefficient of 
the $i$-th diassociative monomial in the expansion of the $j$-th commutative nonassociative monomial.
Using linear algebra over $\mathbb{F}_p$, we find that $\mathrm{rank}(E) = 937$ 
and hence the nullspace has dimension 8.

Let $N$ denote the unique $8 \times 945$ matrix in row canonical form (RCF) over $\mathbb{F}_p$ 
whose row space is the nullspace of $E$.
The entries of $N$ belong to the following (extremely short) list of 33 congruence classes:
0--3, 8, 13, 14, 250000--250002, 499996--500004, 500012, 749997--750004, 999998--1000002.
This strongly suggests that if we had been able to do this calculation using rational arithmetic
then the denominators would all be divisors of 4.
We multiply each row of $N$ by 4, express the entries using symmetric representatives modulo $p$, 
interpret these symmetric representatives as integers, and divide each row by the GCD of its entries.
The results appear in Figure \ref{deg6int}, where column 2 gives the number of nonzero entries for
each row; since these numbers are so large, it is not practical to display the corresponding multilinear 
polynomial identities.

\begin{figure}[ht]
\begin{tabular}{c@{\qquad}c@{\qquad}l}
row & nonzero & reconstructed integer entries \\ \midrule
1 & 693 & $-18, -17, -14, -13, -12, -4, 0, 1, 2, 3, 4, 5, 6, 8, 10, 42$ \\
2 & 660 & $-10, -8, -4, -3, -2, 0, 1, 2, 4, 10$ \\
3 & 711 & $-8, -6, -4, -3, -2, -1, 0, 1, 2, 3, 4$ \\
4 & 684 & $-22, -21, -20, -18, -17, -16, -4, -2, 0, 1, 3, 4, 5, 6, 7, 8, 52$ \\
5 & 585 & $-9, -8, -5, -4, -3, -1, 0, 2, 3, 4, 5, 12$ \\
6 & 585 & $-9, -8, -5, -4, -3, -1, 0, 2, 3, 4, 5, 12$ \\
7 & 405 & $-3, 0, 1, 8$ \\
8 & 495 & $-5, 0, 1, 14$
\end{tabular}
\caption{Integer coefficients reconstructed from modular computations}
\label{deg6int}
\end{figure}

\begin{figure}[ht]
\footnotesize
$\begin{array}{l}
\left[
\begin{array}{@{\,}c@{\,}c@{\,}c@{\,}c@{\,}c@{\,}c@{\,}c@{\,}c@{\,}}
1 & . & . & . & . & . & . & . \\
. & 1 & . & . & . & . & . & . \\
. & . & 1 & . & . & . & . & . \\
. & . & . & 1 & . & . & . & . \\
. & . & . & . & 1 & . & . & . \\
. & . & . & . & . & 1 & . & . \\
. & . & . & . & . & . & 1 & . \\
. & . & . & . & . & . & . & 1
\end{array}
\right]
\;
\left[
\begin{array}{@{\,}c@{\,}c@{\,}c@{\,}c@{\,}c@{\,}c@{\,}c@{\,}c@{\,}}
1 & . & . & . & . & . & . & . \\
. & 1 & . & . & . & . & . & . \\
. & . & 1 & . & . & . & . & . \\
. & . & . & 1 & . & . & . & . \\
. & . & . & . & . & 1 & . & . \\
. & . & . & . & 1 & . & . & . \\
. & . & . & . & . & . & 1 & . \\
. & . & . & . & . & . & . & 1
\end{array}
\right]
\;
\left[
\begin{array}{@{\,}c@{\,}c@{\,}c@{\,}c@{\,}c@{\,}c@{\,}c@{\,}c@{\,}}
. & . & . & 1 & . & . & . & . \\
-1 & 1 & . & 1 & . & . & . & . \\
. & . & 1 & . & . & . & . & . \\
1 & . & . & . & . & . & . & . \\
. & . & . & . & . & 1 & . & . \\
. & . & . & . & 1 & . & . & . \\
. & . & . & . & . & . & 1 & . \\
. & . & . & . & . & . & . & 1
\end{array}
\right]
\;
\left[
\begin{array}{@{\,}c@{\,}c@{\,}c@{\,}c@{\,}c@{\,}c@{\,}c@{\,}c@{\,}}
-1 & 1 & . & 1 & . & . & . & . \\
. & . & . & 1 & . & . & . & . \\
. & . & 1 & . & . & . & . & . \\
. & 1 & . & . & . & . & . & . \\
-1 & 1 & . & . & . & 1 & . & . \\
-1 & 1 & . & . & 1 & . & . & . \\
10 & -10 & . & . & . & . & 1 & . \\
2 & -2 & . & . & . & . & . & 1
\end{array}
\right]
\;
\left[
\begin{array}{@{\,}c@{\,}c@{\,}c@{\,}c@{\,}c@{\,}c@{\,}c@{\,}c@{\,}}
. & . & . & . & 1 & . & . & . \\
-1 & 1 & . & . & 1 & . & . & . \\
-1 & . & 1 & . & 1 & . & . & . \\
. & . & . & 1 & . & . & . & . \\
. & . & . & . & . & 1 & . & . \\
1 & . & . & . & . & . & . & . \\
. & . & . & . & . & . & 1 & . \\
3 & . & . & . & -3 & . & . & 1
\end{array}
\right]
\\
\\
\left[
\begin{array}{@{\,}c@{\,}c@{\,}c@{\,}c@{\,}c@{\,}c@{\,}c@{\,}c@{\,}}
-1 & . & 1 & . & 1 & . & . & . \\
-1 & 1 & . & . & 1 & . & . & . \\
. & . & . & . & 1 & . & . & . \\
. & . & . & 1 & . & . & . & . \\
-1 & . & 1 & . & . & 1 & . & . \\
. & . & 1 & . & . & . & . & . \\
8 & . & -8 & . & . & . & 1 & . \\
3 & . & . & . & -3 & . & . & 1
\end{array}
\right]
\;
\left[
\begin{array}{@{\,}c@{\,}c@{\,}c@{\,}c@{\,}c@{\,}c@{\,}c@{\,}c@{\,}}
-1 & . & 1 & . & 1 & . & . & . \\
. & . & . & . & 1 & . & . & . \\
-1 & 1 & . & . & 1 & . & . & . \\
-1 & 1 & . & 1 & . & . & . & . \\
-1 & . & 1 & . & . & 1 & . & . \\
. & . & 1 & . & . & . & . & . \\
10 & -2 & -8 & . & . & . & 1 & . \\
5 & -2 & . & . & -3 & . & . & 1
\end{array}
\right]
\;
\left[
\begin{array}{@{\,}c@{\,}c@{\,}c@{\,}c@{\,}c@{\,}c@{\,}c@{\,}c@{\,}}
. & . & . & . & 1 & . & . & . \\
-1 & 1 & . & . & 1 & . & . & . \\
. & . & 1 & -1 & 1 & . & . & . \\
1 & . & . & . & . & . & . & . \\
. & . & . & . & . & 1 & . & . \\
. & . & . & 1 & . & . & . & . \\
. & . & . & . & . & . & 1 & . \\
. & . & . & 3 & -3 & . & . & 1
\end{array}
\right]
\;
\left[
\begin{array}{@{\,}c@{\,}c@{\,}c@{\,}c@{\,}c@{\,}c@{\,}c@{\,}c@{\,}}
-1 & 1 & . & . & 1 & . & . & . \\
. & . & . & . & 1 & . & . & . \\
. & . & 1 & -1 & 1 & . & . & . \\
. & 1 & . & . & . & . & . & . \\
-1 & 1 & . & . & . & 1 & . & . \\
-1 & 1 & . & 1 & . & . & . & . \\
10 & -10 & . & . & . & . & 1 & . \\
2 & -2 & . & 3 & -3 & . & . & 1
\end{array}
\right]
\\
\\
\left[
\begin{array}{@{\,}c@{\,}c@{\,}c@{\,}c@{\,}c@{\,}c@{\,}c@{\,}c@{\,}}
-1 & . & 1 & . & 1 & . & . & . \\
-1 & 1 & . & . & 1 & . & . & . \\
. & . & 1 & -1 & 1 & . & . & . \\
. & . & 1 & . & . & . & . & . \\
-1 & . & 1 & . & . & 1 & . & . \\
. & . & . & 1 & . & . & . & . \\
8 & . & -8 & . & . & . & 1 & . \\
3 & . & -3 & 3 & -3 & . & . & 1
\end{array}
\right]
\;
\left[
\begin{array}{@{\,}c@{\,}c@{\,}c@{\,}c@{\,}c@{\,}c@{\,}c@{\,}c@{\,}}
-1 &  . &  1 &  . &  1 & . & . & . \\
 . &  . &  . &  . &  1 & . & . & . \\
 . &  . &  1 & -1 &  1 & . & . & . \\
 . &  . &  1 &  . &  . & . & . & . \\
-1 &  . &  1 &  . &  . & 1 & . & . \\
-1 &  1 &  . &  1 &  . & . & . & . \\
10 & -2 & -8 &  . &  . & . & 1 & . \\
 2 &  1 & -3 &  3 & -3 & . & . & 1
\end{array}
\right]
\end{array}$
\caption{Representation matrices for conjugacy class representatives}
\label{10matrices}
\end{figure}

The symmetric group $S_6$ acts on the row space of $N$ by permuting $abcdef$.
For the partitions
$1^6$, $21^4$, $2^21^2$, $2^3$, $31^3$, $321$, $3^2$, $41^2$, $42$, $51$, $6$
we choose the following conjugacy class representatives, expressed as products of disjoint cycles: 
$e$, (12), (12)(34), (12)(34)(56), (123), (123)(45), (123)(456), (1234), (1234)(56), (12345), (123456).
For each of these 11 permutations $\sigma$, we compute the matrix representing $\sigma$
with respect to the reconstructed integer basis of the row space of $N$.
These 11 matrices are displayed in Figure \ref{10matrices}.
The traces of these matrices give the character of the row space of $N$ as an $S_6$-module:
$[ 8, 6, 4, 2, 5, 3, 2, 4, 2, 3, 2 ]$.
This character is a linear combination of the first two rows of the
character table for $S_6$:
\[
3
[1,1,1,1,1,1,1,1,1,1,1]
+
[5,3,1,-1,2,0,-1,1,-1,0,-1].
\]
Hence the row space of $N$ is isomorphic to $3[6] \oplus [51]$ as an $S_6$-module.

We now consider the problem of finding all nonlinear identities in the variables $x^6$ and $x^5y$.
The structure of the row space of $N$ as an $S_6$-module implies that every multilinear identity
in degree 6 can be expressed as a linear combination of linearizations of nonlinear identities 
corresponding to these two partitions.
Since the matrices are small, we are able to use integer arithmetic.

\begin{figure}[ht]
$
E =
\left[
\begin{array}{r@{\;\;\;}r@{\;\;\;}r@{\;\;\;}r@{\;\;\;}r@{\;\;\;}r}
342 & 336 & 312 & 216 & 192 & 192 \\
118 & 112 & 104 & 168 & 192 & 128 \\
 52 &  64 &  96 & 128 & 128 & 192 \\
 52 &  64 &  96 & 128 & 128 & 192 \\
118 & 112 & 104 & 168 & 192 & 128 \\
342 & 336 & 312 & 216 & 192 & 192
\end{array}
\right]
\quad
U =
\left[
\begin{array}{r@{\;\;\;}r@{\;\;\;}r@{\;\;\;}r@{\;\;\;}r@{\;\;\;}r}
 1 & -2 &  0 & -2 & -1 &  5 \\
-4 &  1 &  2 &  1 & -2 &  3 \\
 0 & -1 &  0 & -2 & -1 &  5 \\
 0 &  2 & -2 & -2 &  1 &  1 \\
 0 &  1 & -2 &  4 & -3 &  0 \\
-8 &  4 &  5 &  1 &  1 & -3
\end{array}
\right]
$
\caption{Expansion matrix and transform matrix for variables $x^6$}
\label{deg6exp6}
\end{figure}

With variables $x^6$, there is only one commutative nonassociative monomial for each of the six
association types, and only one diassociative monomial for each possible center; we obtain the expansion 
matrix $E$ of Figure \ref{deg6exp6}.
Using elementary row operations defined over $\mathbb{Z}$, together with the LLL algorithm for 
lattice basis reduction \cite{BP2009}, we compute the integer matrix $U$ of Figure \ref{deg6exp6} 
for which $\det(U) = \pm 1$ and $U E^t = H$ where $H$ (not displayed) is the Hermite normal form (HNF) 
of the transpose of $E$.
From this we find that $\mathrm{rank}(E) = 3$ and that the last three rows of $U$ form a reduced basis
for the integer nullspace of $E$.
The corresponding polynomial identities are displayed in equations \eqref{deg6.6.1}--\eqref{deg6.6.3}.

With variables $x^5y$, there are respectively 5, 3, 4, 4, 2, 2 commutative nonassociative monomials
for each of the six association types, which we order as follows:
\[
\begin{array}{lllll}
(((x^2x)x)x)y, &
(((x^2x)x)y)x, &
(((x^2x)y)x)x, &
(((x^2y)x)x)x, &
((((xy)x)x)x)x, 
\\
((x^2x^2)x)y, &
((x^2x^2)y)x, &
((x^2(xy))x)x, &
((x^2x)x^2)y, &
((x^2x)(xy))x, 
\\
((x^2y)x^2)x, &
(((xy)x)x^2)x, &
((x^2x)x)(xy), &
((x^2x)y)x^2, &
((x^2y)x)x^2, 
\\
(((xy)x)x)x^2, &
(x^2x^2)(xy), &
(x^2(xy))x^2, &
(x^2x)(x^2y), &
(x^2x)((xy)x).
\end{array}
\]
There are 36 diassociative monomials, ordered by position of the center and then by the position of $y$.
We obtain the expansion matrix $E$ of Figure \ref{deg6exp51}.
Proceeding as before, we find that $\mathrm{rank}(E) = 4$ and that the last three rows of the transform
matrix $U$ form a reduced basis for the integer nullspace of $E$.
The corresponding polynomial identities are displayed in equations \eqref{deg6.51.1}--\eqref{deg6.51.4}.
\end{proof}

\begin{figure}[ht]
\small
$
\left[
\begin{array}{
r@{\;\;}r@{\;\;}r@{\;\;}r@{\;\;}r@{\;\;}r@{\;\;}r@{\;\;}r@{\;\;}r@{\;\;}r@{\;\;}
r@{\;\;}r@{\;\;}r@{\;\;}r@{\;\;}r@{\;\;}r@{\;\;}r@{\;\;}r@{\;\;}r@{\;\;}r
}
86 & 128 & 64 & 32 & 16 & 80 & 128 & 32 & 56 & 64 & 64 & 32 & 44 & 64 & 32 & 16 & 32 & 32 & 48 & 24 \\
0 & 22 & 96 & 80 & 72 & 0 & 16 & 80 & 0 & 76 & 16 & 72 & 44 & 0 & 32 & 48 & 32 & 32 & 0 & 48 \\
0 & 0 & 6 & 88 & 124 & 0 & 0 & 84 & 0 & 12 & 128 & 80 & 0 & 24 & 64 & 64 & 0 & 48 & 48 & 24 \\
0 & 0 & 64 & 82 & 98 & 0 & 0 & 84 & 0 & 64 & 24 & 80 & 0 & 64 & 40 & 56 & 0 & 48 & 32 & 32 \\
0 & 128 & 96 & 56 & 31 & 0 & 128 & 52 & 0 & 80 & 64 & 44 & 64 & 0 & 32 & 28 & 64 & 16 & 0 & 48 \\
256 & 64 & 16 & 4 & 1 & 256 & 64 & 4 & 256 & 16 & 16 & 4 & 64 & 64 & 16 & 4 & 64 & 16 & 64 & 16 \\
32 & 22 & 32 & 16 & 8 & 32 & 16 & 16 & 48 & 12 & 16 & 8 & 20 & 64 & 32 & 16 & 32 & 32 & 32 & 16 \\
0 & 10 & 12 & 32 & 32 & 0 & 16 & 24 & 0 & 20 & 16 & 24 & 20 & 0 & 32 & 48 & 32 & 32 & 0 & 32 \\
0 & 0 & 4 & 22 & 46 & 0 & 0 & 28 & 0 & 8 & 24 & 32 & 0 & 16 & 40 & 56 & 0 & 48 & 32 & 16 \\
0 & 0 & 32 & 34 & 26 & 0 & 0 & 28 & 0 & 16 & 24 & 24 & 0 & 64 & 40 & 32 & 0 & 48 & 32 & 16 \\
0 & 64 & 32 & 12 & 5 & 0 & 64 & 12 & 0 & 32 & 16 & 12 & 64 & 0 & 16 & 12 & 64 & 16 & 0 & 32 \\
86 & 22 & 6 & 2 & 1 & 80 & 16 & 4 & 56 & 16 & 8 & 4 & 64 & 24 & 8 & 4 & 64 & 16 & 32 & 16 \\
20 & 10 & 6 & 8 & 4 & 32 & 16 & 4 & 48 & 8 & 16 & 8 & 20 & 24 & 32 & 16 & 32 & 16 & 48 & 24 \\
0 & 10 & 8 & 6 & 14 & 0 & 16 & 12 & 0 & 20 & 8 & 24 & 20 & 0 & 8 & 40 & 32 & 16 & 0 & 48 \\
0 & 0 & 6 & 14 & 16 & 0 & 0 & 16 & 0 & 12 & 24 & 24 & 0 & 24 & 40 & 32 & 0 & 32 & 48 & 24 \\
0 & 0 & 16 & 16 & 10 & 0 & 0 & 16 & 0 & 16 & 32 & 16 & 0 & 64 & 32 & 16 & 0 & 32 & 64 & 16 \\
0 & 22 & 12 & 6 & 6 & 0 & 16 & 12 & 0 & 28 & 8 & 16 & 44 & 0 & 8 & 16 & 32 & 16 & 0 & 48 \\
32 & 10 & 4 & 2 & 2 & 32 & 16 & 4 & 48 & 12 & 8 & 8 & 44 & 16 & 8 & 8 & 32 & 16 & 32 & 32 \\
32 & 10 & 4 & 2 & 2 & 32 & 16 & 4 & 48 & 12 & 8 & 8 & 44 & 16 & 8 & 8 & 32 & 16 & 32 & 32 \\
0 & 22 & 12 & 6 & 6 & 0 & 16 & 12 & 0 & 28 & 8 & 16 & 44 & 0 & 8 & 16 & 32 & 16 & 0 & 48 \\
0 & 0 & 16 & 16 & 10 & 0 & 0 & 16 & 0 & 16 & 32 & 16 & 0 & 64 & 32 & 16 & 0 & 32 & 64 & 16 \\
0 & 0 & 6 & 14 & 16 & 0 & 0 & 16 & 0 & 12 & 24 & 24 & 0 & 24 & 40 & 32 & 0 & 32 & 48 & 24 \\
0 & 10 & 8 & 6 & 14 & 0 & 16 & 12 & 0 & 20 & 8 & 24 & 20 & 0 & 8 & 40 & 32 & 16 & 0 & 48 \\
20 & 10 & 6 & 8 & 4 & 32 & 16 & 4 & 48 & 8 & 16 & 8 & 20 & 24 & 32 & 16 & 32 & 16 & 48 & 24 \\
86 & 22 & 6 & 2 & 1 & 80 & 16 & 4 & 56 & 16 & 8 & 4 & 64 & 24 & 8 & 4 & 64 & 16 & 32 & 16 \\
0 & 64 & 32 & 12 & 5 & 0 & 64 & 12 & 0 & 32 & 16 & 12 & 64 & 0 & 16 & 12 & 64 & 16 & 0 & 32 \\
0 & 0 & 32 & 34 & 26 & 0 & 0 & 28 & 0 & 16 & 24 & 24 & 0 & 64 & 40 & 32 & 0 & 48 & 32 & 16 \\
0 & 0 & 4 & 22 & 46 & 0 & 0 & 28 & 0 & 8 & 24 & 32 & 0 & 16 & 40 & 56 & 0 & 48 & 32 & 16 \\
0 & 10 & 12 & 32 & 32 & 0 & 16 & 24 & 0 & 20 & 16 & 24 & 20 & 0 & 32 & 48 & 32 & 32 & 0 & 32 \\
32 & 22 & 32 & 16 & 8 & 32 & 16 & 16 & 48 & 12 & 16 & 8 & 20 & 64 & 32 & 16 & 32 & 32 & 32 & 16 \\
256 & 64 & 16 & 4 & 1 & 256 & 64 & 4 & 256 & 16 & 16 & 4 & 64 & 64 & 16 & 4 & 64 & 16 & 64 & 16 \\
0 & 128 & 96 & 56 & 31 & 0 & 128 & 52 & 0 & 80 & 64 & 44 & 64 & 0 & 32 & 28 & 64 & 16 & 0 & 48 \\
0 & 0 & 64 & 82 & 98 & 0 & 0 & 84 & 0 & 64 & 24 & 80 & 0 & 64 & 40 & 56 & 0 & 48 & 32 & 32 \\
0 & 0 & 6 & 88 & 124 & 0 & 0 & 84 & 0 & 12 & 128 & 80 & 0 & 24 & 64 & 64 & 0 & 48 & 48 & 24 \\
0 & 22 & 96 & 80 & 72 & 0 & 16 & 80 & 0 & 76 & 16 & 72 & 44 & 0 & 32 & 48 & 32 & 32 & 0 & 48 \\
86 & 128 & 64 & 32 & 16 & 80 & 128 & 32 & 56 & 64 & 64 & 32 & 44 & 64 & 32 & 16 & 32 & 32 & 48 & 24
\end{array}
\right]
$
\caption{Expansion matrix $E$ for variables $x^5y$}
\label{deg6exp51}
\end{figure}


\section{Degree 7}

\begin{theorem}
In degree 7, there is a 570-dimensional space of multilinear polynomial identities satisfied by 
the symmetrization of the Jordan diproduct which do not follow from commutativity or 
the linearizations of the identities in Theorem \ref{theorem6}.
This $S_7$-module has the following structure:
  \[
  7 [61] \oplus
  6 [52] \oplus
  4 [51^2] \oplus
  5 [43] \oplus
  5 [421] \oplus
  [41^3] \oplus
  3 [3^21] \oplus
  [32^2] \oplus
  [321^2].
  \]
\end{theorem}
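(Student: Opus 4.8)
The plan is to follow the scheme of Sections 3 and 4, but the relevant multilinear spaces are now too large for direct linear algebra: $\dim\free(7) = 11!! = 10395$ and $\dim\dias(7) = 7\cdot 7! = 35280$, so the analogue of the expansion matrix $E$ would be $35280\times 10395$. Instead I would use the constructive representation theory of the symmetric group \cite{BMP} --- the same machinery already used for the character computation in the proof of Theorem \ref{theorem6} --- to decompose the problem into one block for each partition $\lambda\vdash 7$. All rank computations are carried out modulo the large prime $p = 1000003$; since the quantities we ultimately need are $S_7$-multiplicities and a character, which are small non-negative integers, rational reconstruction is immediate and the results can be confirmed with a second prime.

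The first task is to determine the full multilinear identity module in degree 7. For each partition $\lambda\vdash 7$, with $d_\lambda = \dim[\lambda]$, I would build the representation matrix of the expansion map $X$ on the $\lambda$-isotypic component: its columns are indexed by a basis of $\free(7)$ coming from the 11 commutative nonassociative association types in degree 7, its rows by a basis of $\dias(7)$ coming from the 7 positions of the center, and its entries are read off from the degree-7 expansions of the 11 association types under $X$ (computed exactly as in the lower-degree cases). The corank of this matrix is the multiplicity $n_\lambda$ of $[\lambda]$ in $\ker X(7)$, the space of all multilinear polynomial identities of degree 7 satisfied by the symmetrized Jordan diproduct.

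The second task is to identify and subtract the old identities. By Theorem \ref{theorem6} the degree-6 multilinear identity module is $3[6]\oplus[51]$; linearizing the three identities \eqref{deg6.6.1}--\eqref{deg6.6.3} in $x^6$ together with the four identities of Figure \ref{fouridentities} in $x^5y$ gives an explicit 8-dimensional space of multilinear identities $f(x_1,\dots,x_6)\equiv 0$. Each such $f$ has exactly seven essentially distinct degree-7 liftings inside $\free$ (after using commutativity of $\omega$): attach a new variable $x_7$ to $f$ at the root, and substitute $\omega(x_i,x_7)$ for $x_i$ for each $i = 1,\dots,6$. These $8\cdot 7 = 56$ elements generate an $S_7$-submodule $\mathrm{Old}(7)$, the degree-7 part of the ideal of consequences of the degree-6 identities; since a consequence of an identity is an identity, $\mathrm{Old}(7)\subseteq\ker X(7)$. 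For each $\lambda$ I would compute the multiplicity $o_\lambda$ of $[\lambda]$ in $\mathrm{Old}(7)$ by a module-generators rank computation whose rows are the $\lambda$-images of the 56 generators, so that the module of new identities, $\ker X(7)/\mathrm{Old}(7)$, has multiplicity $n_\lambda - o_\lambda$ of $[\lambda]$. Tabulating $n_\lambda - o_\lambda$ over the 15 partitions of 7, one checks that it equals $7,6,4,5,5,1,3,1,1$ for $\lambda = 61, 52, 51^2, 43, 421, 41^3, 3^21, 32^2, 321^2$ and is $0$ for the remaining six partitions --- in particular $0$ for the trivial partition $7$, so there are no new identities in the single variable $x^7$ --- whence $\dim\bigl(\ker X(7)/\mathrm{Old}(7)\bigr) = \sum_\lambda (n_\lambda - o_\lambda)\,d_\lambda = 570$.

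The main obstacle is twofold. Computationally, the representation matrices for the partitions of largest dimension, $[421]$ and $[321^2]$ with $d_\lambda = 35$ and their neighbours, are the bottleneck; these force modular arithmetic and careful memory management, but that is by now routine. The genuinely delicate point is the bookkeeping for $\mathrm{Old}(7)$: one must be certain that the seven lifting operations applied to a multilinear degree-6 identity, together with the $S_7$-action, really span the entire degree-7 component of the ideal of consequences --- omitting a lifting would miscount some old identities as new and would corrupt the claimed decomposition. I would guard against this by independently recomputing the character of $\ker X(7)/\mathrm{Old}(7)$ as $\chi_{\ker X} - \chi_{\mathrm{Old}}$ from the traces of the 11 conjugacy-class representatives used in the proof of Theorem \ref{theorem6}, and verifying that it is a genuine non-negative integer combination of irreducible characters with exactly the stated multiplicities.
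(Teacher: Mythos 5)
Your proposal is correct and follows essentially the same route as the paper: decompose by the irreducible representations of $S_7$, compute for each partition $\lambda$ the multiplicity of $[\lambda]$ in the kernel of the expansion map and subtract the multiplicity of $[\lambda]$ in the submodule generated by the seven liftings of each linearized degree-6 identity. The only presentational difference is that the paper works with $11d_\lambda$ columns per partition (one group-algebra block per association type) and therefore must adjoin the 30 ``symmetries of the association types'' as explicit rows of the old-identity matrix, whereas you absorb commutativity by taking a basis of $\free(7)$ from the outset; both are sound.
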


\begin{proof}
We obtain this result by extension of the methods already described for degree $n \le 6$.
Owing to the large size of the matrices in degree 7, we decompose the computation into 
subproblems corresponding to the irreducible representations of the symmetric group $S_7$.
The theory and algorithms required for this application of representation theory to polynomial
identities are explained in detail in the survey paper \cite{BMP}.
We provide a brief summary of the computations.

Step 1.
There are 11 association types for a commutative nonassociative operation in degree 7,
on which we impose the reverse degree-lexicographical order:
\[
\begin{array}{l@{\qquad}l@{\qquad}l}
(((((--)-)-)-)-)-, &
((((--)(--))-)-)-, &
((((--)-)(--))-)-, \\
((((--)-)-)(--))-, &
(((--)(--))(--))-, &
(((--)-)((--)-))-, \\
((((--)-)-)-)(--), &
(((--)(--))-)(--), &
(((--)-)(--))(--), \\
(((--)-)-)((--)-), &
((--)(--))((--)-). &
\end{array}
\]
For each type $t$, there is a nonempty set of permutations $\sigma \in S_7$ of order 2 for which
$t(\iota) = t(\sigma)$ as a result of the commutativity of the symmetrized Jordan diproduct,
where $\iota$ is the identity permutation and $t(\sigma)$ denotes type $t$ applied to permutation $\sigma$ 
of (the subscripts of) the arguments $x_1, \dots, x_7$.
For example, the last type produces $\sigma = (12)$, $(34)$, $(13)(24)$, $(56)$.
We obtain 30 polynomial identities $t(\iota) - t(\sigma) \equiv 0$ in degree 7,
called the \emph{symmetries of the association types}.

Step 2.
We linearize the nonlinear identities of Theorem \ref{theorem6}.
For variables $x^6$, we replace the six occurrences of $x$ in each term by $x_1 \cdots x_6$,
and sum over all permutations of the subscripts $1,\dots,6$.
For variables $x^5y$, we replace the five occurrences of $x$ in each term by $x_1 \cdots x_5$,
sum over all permutations of subscripts $1,\dots,5$, and replace $y$ by $x_6$.
In both cases, we straighten the resulting monomials using commutativity of the symmetrized Jordan 
diproduct: we replace each monomial by the lexicographically first representative of its
equivalence class.

Step 3.
Let $f(x_1,\dots,x_6)$ be one of the seven linearized identities produced by the previous step.
Each such $f$ produces seven multilinear identities in degree 7:
\[
f( x_1, \dots, x_i x_7, \dots, x_6 ) \quad (1 \le i \le 6),
\quad\quad\quad
f( x_1, \dots, x_6 ) x_7.
\]
We obtain 49 polynomial identities $g \equiv 0$ in degree 7, called the \emph{consequences of 
the known identities in degree 6}.

Step 4.
For each of the 15 partitions $\lambda$ of 7 with corresponding irreducible representation of dimension 
$d_\lambda$, we construct the $30 d_\lambda \times 11 d_\lambda$ matrix $S_\lambda$ in which 
the $(i,j)$ block of size $d_\lambda$ is the matrix in representation $\lambda$ for the element 
of the group algebra given by the term (if any) of symmetry $i$ in association type $j$.
We call $\mathrm{rank}(S_\lambda)$ the \emph{rank of the symmetries for partition $\lambda$}.

Step 5.
For each partition $\lambda$ of 7, we construct the $49 d_\lambda \times 11 d_\lambda$ matrix 
$C_\lambda$ in which the $(i,j)$ block is the matrix in representation $\lambda$ 
for the element of the group algebra given by the terms (if any) of consequence $i$ in association type $j$.
We call $\mathrm{rank}(C_\lambda)$ the \emph{rank of the consequences for partition $\lambda$}.

Step 6.
For each partition $\lambda$ of 7, we stack (combine vertically) $S_\lambda$ and $C_\lambda$ 
to obtain the $79 d_\lambda \times 11 d_\lambda$ matrix $SC_\lambda$.
We call $\mathrm{rank}(SC_\lambda)$ the \emph{rank of the old identities for partition $\lambda$},
where \emph{old identities} means identities in degree 7 which are consequences of known identities
of lower degree (the symmetries of the association types are the consequences of commutativity).

Step 7.
For each partition $\lambda$ of 7, we construct the full rank matrix $N_\lambda$ in row canonical form 
whose row space is the nullspace of the restriction of the expansion map $X$ to the isotypic component
of the $S_7$-module $\free(7)$ for representation $\lambda$.
We call $\mathrm{rank}(N_\lambda)$ the \emph{rank of all identities for partition $\lambda$}.
Hence $N_\lambda$ has size $\mathrm{rank}(N_\lambda) \times 11 d_\lambda$,
and the row space of $SC_\lambda$ is a subspace of the row space of $N_\lambda$.
The difference $\mathrm{new}(\lambda) = \mathrm{rank}(N_\lambda) - \mathrm{rank}(SC_\lambda)$ 
is always nonnegative; we call it the \emph{rank of the new identities for partition $\lambda$}.

We summarize these computations in Figure \ref{new7}, which completes the proof.
\end{proof}

\begin{figure}[ht]
$\begin{array}{
l|
r@{\;\;}r@{\;\;}r@{\;\;}r@{\;\;}r@{\;\;}
r@{\;\;}r@{\;\;}r@{\;\;}r@{\;\;}r@{\;\;}
r@{\;\;}r@{\;\;}r@{\;\;}r@{\;\;}r
}
\lambda & 7 & 61 & 52 & 51^2 & 43 & 421 & 41^3 & 3^21 & 32^2 & 321^1 & 31^4 & 2^21 & 2^21^3 & 21^5 & 1^7
\\ 
\midrule
\mathrm{rank}(S_\lambda)  & 
0 & 31 &  94 & 120 & 105 & 295 & 190 & 185 & 186 & 335 & 155 & 134 & 145 & 65 & 11  
\\
\mathrm{rank}(SC_\lambda) & 
7 & 40 & 100 & 122 & 106 & 296 & 190 & 185 & 186 & 335 & 155 & 134 & 145 & 65 & 11
\\ 
\mathrm{rank}(N_\lambda)  & 
7 & 47 & 106 & 126 & 111 & 301 & 191 & 188 & 187 & 336 & 155 & 134 & 145 & 65 & 11
\\ \midrule
\mathrm{new}(\lambda)     & 
0 &  7 &   6 &   4 &   5 &   5 &   1 &   3 &   1 &   1 &   0 &   0 &   0 &  0 &  0
\end{array}$
\caption{Multiplicities of irreducible representations in degree 7}
\label{new7}
\end{figure}



\end{document}